\newtheorem{theorem}{Theorem}
\theoremstyle{remark}
\newtheorem{remark}{\emph{Remark}}
\theoremstyle{remark}
\newtheorem{definition}{Definition}
\theoremstyle{remark}
\title{Approximations of algebraic irrationalities with matrices}
\author{Stefano Barbero, Umberto Cerruti, Nadir Murru \\ University of Turin, Department of Mathematics\\ Via Carlo Alberto 10, Turin, Italy \\ stefano.barbero@unito.it, umberto.cerruti@unito.it, nadir.murru@unito.it}
\date{}
\begin{document}

\maketitle

\begin{abstract}

We discuss the use of matrices for providing sequences of rationals that approximate algebraic irrationalities. In particular, we study the regular representation of algebraic extensions, proving that ratios between two entries of the matrix of the regular representation converge to specific algebraic irrationalities. As an interesting special case, we focus on cubic irrationalities giving a generalization of the Khovanskii matrices for approximating cubic irrationalities. We discuss the quality of such approximations considering both rate of convergence and size of denominators. Moreover, we briefly perform a numerical comparison with well--known iterative methods (such as Newton and Halley ones), showing that the approximations provided by regular representations appear more accurate for the same size of the denominator.

\end{abstract}

%arithmetic mean \sep contraharmonic mean \sep geometric mean \sep harmonic mean \sep harmonic numbers \sep integer sequences
%\MSC[2010] 11N80 \sep 26E60
\noindent \emph{Keywords:  algebraic irrationals, diophantine approximation, matrices, root finding methods.}

\noindent \emph{AMS Subject Classification: 11K60, 11J68.}

\section{Convergence properties for regular representations of algebraic extensions}
\label{regular}
\normalsize
The study of approximations of irrational numbers by means of rationals is a very important and rich research field. This research field is named \emph{Diophantine approximation} in honor of Diophantus of Alexandria whose studies principally had dealt with researching rational solutions of algebraic equations. During the years, mathematicians have considerably improved results about Diophantine approximation.

In this context iterative methods, such as Newton method and higher order generalizations (i.e., Householder methods \cite{Householder}) are widely used and studied. Recently, many different iterative root--finding methods have been developed improving classical methods (see, e.g., \cite{Abba}, \cite{Noor}, \cite{Grau}). However, the iterative methods are computationally slow and denominator size of the provided rational approximations rapidly increases. On the other hand, continued fractions provide best approximations of real numbers. However, their use is not ever convenient from a computational point of view.

In the case of algebraic numbers, iterative methods can be replaced by more convenient ones. For example, in \cite{Pet}, the authors propose an algorithm based on the LLL-reduction procedure for approximating algebraic numbers. Recently, different techniques involving powers of \(2\times 2\) matrices have been developed for approximating quadratic irrationalities (see, e.g., \cite{Wild} and \cite{abcm}). In \cite{Khov} and \cite{Lau}, authors introduced particular \(3\times 3\) matrices for studying approximations of cubic roots. The use of matrices is very advantageous since power of matrices can be fastly evaluated and their entries are linear recurrent sequences whose properties can be exploited to study convergence. Moreover, study of simultaneous approximations is a very classical and well investigated topic, see, e.g., \cite{Adams} and \cite{Che}.

In the following, firstly, we introduce a family of matrices starting from the regular representation of algebraic extensions, studying their approximating properties. Then in section \ref{gen}, we focus on cubic irrationalities, generalizing Khovanskii matrices and other kinds of matrices used in the approximation of cubic irrationalities. Moreover, In section \ref{num}, we provide numerical results about the studied approximations. In particular, we discuss performances of our approximations with respect to some parameters and we compare them with certain well--known iterative methods, such as Newton, Halley, and Noor methods.

Let \(\alpha\) be a real root of \(f(t) = t^m-\sum\sb{s=0}^{m-1} u\sb{m-s}t^s\), with \(u\sb i\in\mathbb Q\), for \(i=1,...,m\), irreducible over \(\mathbb Q\). The algebraic extension \(\mathbb Q(\alpha)\) has basis \((1,\alpha,\alpha^2,...,\alpha^{m-1})\).
 Let \(\sum\sb{i=0}^{m-1} x\sb i\alpha^i\) be an element of \(\mathbb Q(\alpha)\), it can be represented by the \(m\times m\) matrix \(M=(M\sb{i,j})\)such that

\begin{equation} \label{eq:rep}\sum\sb{i=0}^{m-1} x\sb i\alpha^i\alpha^{j-1}=\sum\sb{i=1}^mM\sb{i,j}\alpha^{i-1}, \quad j=1,...,m.\end{equation}

The matrix \(M\) is usually called the regular representation of \(\mathbb Q(\alpha)\). Let us observe that the above identities can be written also in the case that \(f(t)\) is reducible. Thus, in the following, we do not restrict \(f(t)\)to be irreducible and we formally define the matrix \(M\) by means of \eqref{eq:rep}. Sometimes we will use the notation \(M(\textbf{x}, \textbf{u})\), where \(\textbf{x}=(x\sb0,...,x\sb{m-1})\) and \(\textbf{u} = (u1,...,u\sb m)\).

Let \(A\) be the companion matrix of \(f(t)\) defined as
\begin{equation*}
A=\begin{bmatrix}
0 & 0 & 0 & \cdots & 0 & 0 & u\sb{m}\\
1 & 0 & 0 & \cdots & 0 & 0 & u\sb{m-1}\\
0 & 1 & 0 & \cdots & 0 & 0 & u\sb{m-2}\\
\cdots & \cdots & \ddots & \cdots & \cdots & \cdots & \cdots\\
0 & 0 & 0 & 1 & 0 & 0 & u\sb{3}\\
0 & 0 & 0 & 0 & 1 & 0 & u\sb{2}\\
0 & 0 & 0 & 0 & 0 & 1 & u\sb{1}
\end{bmatrix}\label{eq:companionm}
\end{equation*}

By definition of \(M\), it follows that \(M = \sum\sb{n=0}^{m-1} x\sb nA^n\). The entries of matrices \(A^n\) can be explicitly written, see, e.g., Theorem 3.1 in \cite{Chen} (note that here the companion matrix is written in a slightly different form). In this way, the entries of the matrix \(M\) have the explicit expression \(M\sb{i,j} = \sum\sb{n=0}^{m-1} x\sb na\sb{i,j}^{(n)}\), where
\begin{equation*}
a\sb{i,j}^{\left(n\right)}=\underset{k\sb{1}+2k\sb{2}+\cdots+mk\sb{m}=n-i+j}{\sum}\frac{k\sb{m+1-i}+k\sb{m+2-i}+\cdots+k\sb{m}}{k\sb{1}+k\sb{2}+\cdots+k\sb{m}}\left(\begin{array}{c}
k\sb{1}+k\sb{2}+\cdots+k\sb{m}\\
k\sb{1},\ldots,k\sb{m}
\end{array}\right)u\sb{1}^{k\sb{1}}u\sb{2}^{k\sb{2}}\cdots u\sb{m}^{k\sb{m}}\label{eq:anij}
\end{equation*}
for \(k\sb1,...,k\sb m\) non--negative integers and \(\begin{pmatrix}
k\sb{1}+k\sb{2}+\cdots+k\sb{m}\\
k\sb{1},\ldots,k\sb{m}
\end{pmatrix}\) is the multinomial coefficient.

In the following theorem we show convergence properties of \(M\) by means of the Vandermonde matrix.

\begin{theorem} \label{thm:main}
Let \(\alpha\sb1, ..., \alpha\sb m\) be distinct roots of \(f(t) = t^m-\sum\sb{s=0}^{m-1} u\sb{m-s}t^s\). Let \(V = V(\alpha\sb1, ..., \alpha\sb m)\) and \(M(\textbf{x}, \textbf{u})\) be respectively the Vandermonde matrix of \(f(t)\) and the matrix defined by \eqref{eq:rep}, with \(\alpha = \alpha\sb k\) for a given \(k\in \{1,\ldots,m\} \). Let us define 

\[c(\textbf{x},\alpha\sb k) = \min \left\lbrace \cfrac{\lvert \sum\sb{i=0}^{m-1}x\sb i\alpha^i\sb k \rvert}{\lvert \sum\sb{i=0}^{m-1}x\sb i\alpha^i\sb j \rvert}: j = 1, ..., m, \alpha\sb j\not=\alpha\sb k \right\rbrace.\]
\noindent
If \(c(\textbf{x},\alpha\sb k)>1\), then 

\[\lim\sb{n\rightarrow+\infty}\cfrac{M^n\sb{i,j}}{M^n\sb{p,q}}=\cfrac{V^{-1}\sb{i,k}V\sb{k,j}}{V^{-1}\sb{p,k}V\sb{k,q}},\]

given any index \(i,j,p,q\in\{1,...,m\}\) (such that \(i \not= p\) and/or \(j \not= q\)).

\end{theorem}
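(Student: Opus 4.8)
The plan is to diagonalise the companion matrix $A$ by means of the Vandermonde matrix and then read off the dominant term of $M^n$. The starting point is the identity $A = V^{-1} D V$, where $D = \mathrm{diag}(\alpha_1,\dots,\alpha_m)$ and $V$ is the Vandermonde matrix with $V_{k,j} = \alpha_k^{j-1}$. I would establish this directly by checking $VA = DV$ column by column: for the first $m-1$ columns the relation reduces to $\alpha_k\cdot\alpha_k^{j-1} = \alpha_k^{j}$, coming from the subdiagonal of ones, while for the last column it is exactly the statement $\alpha_k^m = \sum_{s=0}^{m-1} u_{m-s}\alpha_k^s$, i.e. $f(\alpha_k)=0$. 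Since the roots $\alpha_1,\dots,\alpha_m$ are distinct, $V$ is invertible, so this genuinely diagonalises $A$.

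Next, writing $g(t) = \sum_{i=0}^{m-1} x_i t^i$, the discussion preceding the theorem identifies $M = \sum_{i=0}^{m-1} x_i A^i = g(A)$. Since $A^i = V^{-1} D^i V$ for every $i$, linearity gives $M = V^{-1} g(D) V = V^{-1}\Lambda V$, where $\Lambda = \mathrm{diag}(\lambda_1,\dots,\lambda_m)$ and $\lambda_k = g(\alpha_k) = \sum_{i=0}^{m-1} x_i \alpha_k^i$. Raising to the $n$-th power collapses the conjugation, $M^n = V^{-1}\Lambda^n V$, so entrywise
\[ M^n_{i,j} = \sum_{l=1}^m V^{-1}_{i,l}\,\lambda_l^{\,n}\,V_{l,j}. \]

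The key observation is that the hypothesis $c(\textbf{x},\alpha_k)>1$ is precisely the condition that $\lambda_k$ is a strictly dominant eigenvalue: since the roots are distinct, the constraint $\alpha_j\neq\alpha_k$ is the same as $j\neq k$, and $c(\textbf{x},\alpha_k) = \min_{j\neq k}|\lambda_k|/|\lambda_j|>1$ means $|\lambda_k|>|\lambda_j|$ for every $j\neq k$ (in particular $\lambda_k\neq 0$). Factoring $\lambda_k^{\,n}$ out of the sum above,
\[ M^n_{i,j} = \lambda_k^{\,n}\Bigl( V^{-1}_{i,k} V_{k,j} + \sum_{l\neq k} V^{-1}_{i,l} V_{l,j}\,(\lambda_l/\lambda_k)^n \Bigr), \]
and because $|\lambda_l/\lambda_k|<1$ for $l\neq k$, every term of the remaining sum tends to $0$. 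The same holds for $M^n_{p,q}$, so when forming the quotient $M^n_{i,j}/M^n_{p,q}$ the common factor $\lambda_k^{\,n}$ cancels and the expression converges to $(V^{-1}_{i,k}V_{k,j})/(V^{-1}_{p,k}V_{k,q})$, which is the asserted limit.

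The main technical points to watch are, first, fixing the index convention so that $A = V^{-1}DV$ (rather than $VDV^{-1}$); this is what makes the dominant coefficients appear as $V^{-1}_{i,k}V_{k,j}$, exactly in the order given in the statement, and is the only place where $f(\alpha_k)=0$ enters. Second, one must justify that the quotient is eventually well defined, i.e. that $M^n_{p,q}\neq 0$ for large $n$, which holds precisely when the limiting denominator $V^{-1}_{p,k}V_{k,q}$ is nonzero, as is implicitly required for the stated ratio to be meaningful. Beyond this, no delicate estimate is needed: the whole argument rests on the geometric decay of the subdominant eigenvalue ratios, and the genuine content is the clean diagonalisation of the companion matrix by the Vandermonde matrix.
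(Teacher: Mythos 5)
Your proposal is correct and follows essentially the same route as the paper: diagonalise the companion matrix via the Vandermonde matrix, write $M^n_{i,j}=\sum_l V^{-1}_{i,l}V_{l,j}\gamma_l^n$ with $\gamma_l=\sum_i x_i\alpha_l^i$, and let the dominance condition $c(\textbf{x},\alpha_k)>1$ kill all terms but the $k$-th in the quotient. The only differences are cosmetic: you verify $VA=DV$ by hand where the paper cites a reference, and you add a (reasonable) remark about the denominator being eventually nonzero, which the paper leaves implicit.
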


\begin{proof}
Let \(V=V(\alpha\sb1, ..., \alpha\sb m)\) be the Vandermonde matrix of \(f(t)\), i.e.,

\begin{equation*}
V = \left[\begin{array}{ccccc}
1 & \alpha\sb1 & \alpha\sb1^2 & \cdots & \alpha\sb1^{m-1}\\
1 & \alpha\sb2 & \alpha\sb2^2 & \cdots & \alpha\sb2^{m-1}\\
\vdots & \vdots & \vdots & \cdots & \vdots\\
1 & \alpha\sb m & \alpha\sb m^2 & \cdots & \alpha\sb m^{m-1}
\end{array}\right]
\end{equation*}

It is well--known that \(V\) can be used in order to diagonalize the companion matrix of \(f(t)\) (see, e.g., \cite{Lan} pag. 69), i.e.,

\begin{equation*}
D(\alpha\sb1, ..., \alpha\sb m):=
\left[\begin{array}{cccc}
\alpha\sb1 & 0 & \cdots & 0\\
0 & \alpha\sb2 & \cdots & 0\\
\vdots & \vdots & \ddots & \vdots\\
0 & 0 & \cdots & \alpha\sb m
\end{array}\right] = VAV^{-1}.
\end{equation*}

Since \(M\) can be written as a linear combination of powers of the companion matrix, we have

\[VMV^{-1}=D\left(\sum\sb{i=0}^{m-1}x\sb i\alpha^i\sb1, ..., \sum\sb{i=0}^{m-1}x\sb i\alpha^i\sb m\right)\]

and

\[VM^nV^{-1}=D\left(\left(\sum\sb{i=0}^{m-1}x\sb i\alpha^i\sb 1\right)^n, ..., \left(\sum\sb{i=0}^{m-1}x\sb i\alpha^i\sb m\right)^n\right).\]

From the previous identity, we obtain

\[M\sb{i,j}^n=V^{-1}\sb{i,1}V\sb{1,j}\left(\sum\sb{i=0}^{m-1}x\sb i\alpha^i\sb1\right)^n+...+V^{-1}\sb{i,m}V\sb{m,j}\left(\sum\sb{i=0}^{m-1}x\sb i\alpha^i\sb m\right)^n\]

and finally

\[\cfrac{M^n\sb{i,j}}{M^n\sb{p,q}}=\cfrac{V^{-1}\sb{i,1}V\sb{1,j}\left(\sum\sb{i=0}^{m-1}x\sb i\alpha^i\sb1\right)^n+...+V^{-1}\sb{i,m}V\sb{m,j}\left(\sum\sb{i=0}^{m-1}x\sb i\alpha^i\sb m\right)^n}{V^{-1}\sb{p,1}V\sb{1,q}\left(\sum\sb{i=0}^{m-1}x\sb i\alpha^i\sb1\right)^n+...+V^{-1}\sb{p,m}V\sb{m,q}\left(\sum\sb{i=0}^{m-1}x\sb i\alpha^i\sb m\right)^n}\]
from which the thesis easily follows dividing numerator and denominator by \(\left(\sum\sb{i=0}^{m-1}x\sb i\alpha^i\sb k\right)^n\).
\end{proof}

\begin{remark}
The condition \(c(\textbf{x},\alpha\sb k)>1\) in the above theorem allows to specify which is the limit of the ratio \(\cfrac{M^n\sb{i,j}}{M^n\sb{p,q}}\), given any index \(i,j,p,q\in\{1,...,m\}\). In particular, it specifies which root \(\alpha\sb{k}\) of the polynomial \(f(t) = t^m-\sum\sb{s=0}^{m-1} u\sb{m-s}t^s\) can be approximated using the matrix \(M\), since \(V^{-1}\sb{i,k}V\sb{k,j}\) and \(V^{-1}\sb{p,k}V\sb{k,q}\), given any index \(i,j,p,q\in\{1,...,m\}\), are all quantities that can be written involving only the root \(\alpha\sb k\), its powers and coefficients of \(f(t)\), as direct calculations on the entries of the Vandermonde matrix show. In section \ref{gen}, we will see some examples regarding cubic polynomials. In the case of totally real cubic and irreducible polynomials, we can see that there always exists an index \(k\) such that the condition \(c(\textbf{x},\alpha\sb k)>1\) holds. Thus, when the quantity \(\textbf{x}\) is fixed, we can check the values of \(c(\textbf{x},\alpha\sb k)\), for any index \(k\), in order to know which specific root of \(f(t)\) we approximate using \(M\) with the chosen value of \(\textbf{x}\). On the other hand, we can choose the values of \(\textbf{x}\) so that  \(c(\textbf{x},\alpha\sb k) > 1\) for a specific root \(\alpha\sb k\) that we would like to approximate.
\end{remark}

We study the rate of convergence of the ratios of the entries of \(M^n\) in the next theorem, where we will use the following notation:
\[L = \cfrac{A\sb k}{B\sb k} = \cfrac{V\sb{i,k}^{-1} V\sb{k,j}}{V\sb{p,k}^{-1}V\sb{k,q}},\]
with \(V\) the Vandermonde matrix as defined in Theorem \ref{thm:main}. Moreover, we will consider
\[\gamma\sb j = \sum\sb{i=0}^{m-1} x\sb i \alpha\sb j^i,\]
and
\[c^{-1}(\textbf{x},\alpha\sb k) = \max \left\lbrace \cfrac{\lvert \sum\sb{i=0}^{m-1}x\sb i\alpha^i\sb j \rvert}{\lvert \sum\sb{i=0}^{m-1}x\sb i\alpha^i\sb k \rvert}: j = 1, ..., m, \alpha\sb j\not=\alpha\sb k \right\rbrace = \max \left\lbrace \cfrac{\lvert \gamma\sb j \rvert}{\lvert \gamma\sb k \rvert}: j = 1, ..., m, \alpha\sb j\not=\alpha\sb k \right\rbrace,\]
where \(\gamma\sb j^n\)'s are the roots of the characteristic polynomial of \(M^n\).

\begin{theorem}
Let \(\alpha\sb 1, ..., \alpha\sb m\) be distinct roots of \(f(t) = t^m-\sum\sb{s=0}^{m-1} u\sb{m-s}t^s\). Let \(V = V(\alpha\sb1, ..., \alpha\sb m)\) and \(M(\textbf{x}, \textbf{u})\) be respectively the Vandermonde matrix of \(f(t)\) and the matrix defined by \eqref{eq:rep}, with \(alpha = \alpha\sb k\) for a given  \(k\in \{1,\ldots, m\}\). If \(c(\textbf{x},\alpha\sb k) > 1\) (i.e. \(0 < c^{-1}(\textbf{x},\alpha\sb k) < 1\)) and \(A\sb lB\sb k - A\sb kB\sb l\not= 0\) (where \(l\) is the index such that \(c^{-1}(\textbf{x},\alpha\sb k) = \cfrac{|\gamma\sb l|}{|\gamma\sb k|}\)), then the order of convergence of \(\left| \cfrac{M^n\sb{i,j}}{M^n\sb{p,q}} - L \right|\) is \(O\left((c^{-1}(\textbf{x},\alpha\sb k))^n\right)\). 
\end{theorem}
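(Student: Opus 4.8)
The plan is to start from the explicit expression for $M^n_{i,j}$ derived in the proof of Theorem~\ref{thm:main}, namely
\[
M^n_{i,j} = \sum\sb{r=1}^m V^{-1}\sb{i,r}V\sb{r,j}\,\gamma\sb r^n,
\]
where $\gamma\sb r = \sum\sb{i=0}^{m-1} x\sb i\alpha\sb r^i$. First I would factor out the dominant term $\gamma\sb k^n$ from both numerator and denominator of $M^n\sb{i,j}/M^n\sb{p,q}$, writing the ratio as
\[
\cfrac{M^n\sb{i,j}}{M^n\sb{p,q}} = \cfrac{A\sb k + \sum\sb{r\neq k} V^{-1}\sb{i,r}V\sb{r,j}\,(\gamma\sb r/\gamma\sb k)^n}{B\sb k + \sum\sb{r\neq k} V^{-1}\sb{p,r}V\sb{r,q}\,(\gamma\sb r/\gamma\sb k)^n},
\]
using the notation $A\sb k = V^{-1}\sb{i,k}V\sb{k,j}$ and $B\sb k = V^{-1}\sb{p,k}V\sb{k,q}$, so that $L = A\sb k/B\sb k$. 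The key point is that every $|\gamma\sb r/\gamma\sb k| \le c^{-1}(\textbf{x},\alpha\sb k) < 1$ for $r\neq k$, so each of these correction terms is $O\big((c^{-1}(\textbf{x},\alpha\sb k))^n\big)$, and the dominant correction comes precisely from the index $l$ achieving the maximum.

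Next I would subtract $L$ from the ratio and put everything over the common denominator $B\sb k + \sum\sb{r\neq k}V^{-1}\sb{p,r}V\sb{r,q}(\gamma\sb r/\gamma\sb k)^n$. The numerator of the difference becomes
\[
\sum\sb{r\neq k}\big(V^{-1}\sb{i,r}V\sb{r,j}\,B\sb k - V^{-1}\sb{p,r}V\sb{r,q}\,A\sb k\big)\Big(\cfrac{\gamma\sb r}{\gamma\sb k}\Big)^n.
\]
Since each ratio $(\gamma\sb r/\gamma\sb k)^n$ decays at least as fast as $(c^{-1}(\textbf{x},\alpha\sb k))^n$, and the term indexed by $l$ decays exactly at that rate with nonzero coefficient (this is where the hypothesis $A\sb l B\sb k - A\sb k B\sb l \neq 0$ enters, ensuring the leading coefficient $V^{-1}\sb{i,l}V\sb{l,j}B\sb k - V^{-1}\sb{p,l}V\sb{l,q}A\sb k = A\sb l B\sb k - A\sb k B\sb l$ does not vanish), the numerator is $\Theta\big((c^{-1}(\textbf{x},\alpha\sb k))^n\big)$. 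Meanwhile the denominator tends to the nonzero constant $B\sb k$, so dividing gives the claimed order $O\big((c^{-1}(\textbf{x},\alpha\sb k))^n\big)$.

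I expect the only genuine subtlety to be the role of the nonvanishing condition $A\sb l B\sb k - A\sb k B\sb l \neq 0$: without it the $l$-th correction term could cancel at leading order, forcing the true rate of convergence to be dictated by the next-largest ratio $|\gamma\sb{r}/\gamma\sb k|$, which would change the exponent. The hypothesis is exactly what guarantees that the leading surviving term matches $(c^{-1}(\textbf{x},\alpha\sb k))^n$, so the convergence is neither faster nor slower than advertised. A minor technical care is needed when several distinct roots $\gamma\sb r$ share the same modulus as $\gamma\sb l$, in which case their combined contribution must still be shown not to cancel identically; but since each term carries its own factor $(\gamma\sb r/\gamma\sb k)^n$ of the same magnitude and the coefficient at $l$ is nonzero by assumption, the overall order remains $O\big((c^{-1}(\textbf{x},\alpha\sb k))^n\big)$, completing the argument.
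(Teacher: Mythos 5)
Your proposal is correct and follows essentially the same route as the paper: both factor $\gamma_k^n$ out of the expression $M^n_{i,j}/M^n_{p,q}=\sum_s A_s\gamma_s^n\big/\sum_s B_s\gamma_s^n$, isolate the dominant correction term coming from the index $l$, and invoke $A_lB_k-A_kB_l\neq 0$ to identify the leading-order coefficient; the only cosmetic difference is that the paper expands $\bigl(1+\tfrac{B_l}{B_k}y^n+o(y^n)\bigr)^{-1}$ while you clear denominators directly. Your closing remark about several $\gamma_r$ sharing the modulus of $\gamma_l$ is a point the paper silently glosses over (it lumps those terms into $o(y^n)$), so your version is, if anything, slightly more careful.
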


\begin{proof}
From the previous theorem, we know that
\[\cfrac{M\sb{i,j}^n}{M\sb{p,q}^n} = \cfrac{\sum\sb{s=1}^m A\sb s \gamma\sb s^n}{\sum\sb{s=1}^m B\sb s \gamma\sb s^n} = \cfrac{A\sb k + \sum\sb{s=1, s\not=k}^m A\sb s \left(\frac{\gamma\sb s}{\gamma\sb k}\right)^n}{B\sb k + \sum\sb{s=1, s\not=k}^m B\sb s \left(\frac{\gamma\sb s}{\gamma\sb k}\right)^n}.\]
Let \(l\) be the index such that \(c^{-1}(\textbf{x},\alpha\sb k) = \cfrac{|\gamma\sb l|}{|\gamma\sb k|},\) settingn\( y = \cfrac{\gamma\sb l}{\gamma\sb k}\), we have
\[\cfrac{M\sb{i,j}^n}{M\sb{p,q}^n} = B\sb k^{-1}\left(A\sb k + A\sb l y^n + o (y^n)\right)\left(1 + \frac{B\sb l}{B\sb k}y^n+o(y^n)\right)^{-1} =\]  \[=B\sb k^{-1}\left(A\sb k - \frac{A\sb kB\sb l}{B\sb k}y^n + A\sb ly^n + o(y^n)\right)= \cfrac{A\sb k}{B\sb k} + \cfrac{A\sb lB\sb k - A\sb kB\sb l}{B\sb k^2} y^n + o(y^n).\]
Hence, we obtain

\[\left| \cfrac{M\sb{ij}^n}{M\sb{pq}^n} - L \right| = \left| \cfrac{A\sb lB\sb k - A\sb kB\sb l}{B\sb k^2} y^n + o(y^n) \right| \leq \left|  \cfrac{A\sb lB\sb k - A\sb kB\sb l}{B\sb k^2} \right| |y|^n + o(|y|^n) \leq C \left(c^{-1}(\textbf{x},\alpha\sb k)\right)^n,\]

for a certain constant \(C\) depending on \(\left|  \cfrac{A\sb lB\sb k - A\sb kB\sb l}{B\sb k^2} \right|\).
\end{proof}

\begin{remark} \label{rem:thm2}
The above theorem holds under the condition  \(A\sb lB\sb k - A\sb kB\sb l\not= 0\). We can observe that the case \(A\sb lB\sb k - A\sb kB\sb l = 0\), or equivalently \(\frac{A\sb{k}}{B\sb{k}}=\frac{A\sb{l}}{B\sb{l}}\) may occur in some special situations. In particular, there are some ratios of elements of \(M^n\) that are constant quantities. In fact, the following equalities hold for all \(t\)
\[ V\sb{m,t}^{-1} = \cfrac{1}{\displaystyle \prod\sb{\substack { 1 \leq h \leq m \\ h\not=t }}{(\alpha\sb t - \alpha\sb h)}},
  \quad V\sb{t,m-1} = \alpha\sb t^{m-2} \]
 \[V\sb{1,t}^{-1} = (-1)^{m-1} \cfrac{\displaystyle \prod\sb{i=1, i\not=t}^m \alpha\sb i}{\displaystyle \prod\sb{\substack { 1 \leq h \leq m \\ h\not=t }}{(\alpha\sb t - \alpha\sb h)}}, \quad V\sb{t,m} = \alpha\sb t^{m-1},
\]
thus we obtain that the ratios 
\[\cfrac{A\sb t}{B\sb t} = \cfrac{1}{(-1)^{m-1}\prod\sb{h=1}^m \alpha\sb h},\]
are clearly independent from the choice of \(t\)i.e., we have \(A\sb lB\sb k - A\sb kB\sb l = 0\) when \((i,j,p,q)=(m,m-1,1,m)\), and, for all \(n\geq 1\), the ratios \(\frac{M^n\sb{m,m-1}}{M^n\sb{1,m}}\) have the same value. We also have a similar situation when \((i,j,p,q)=(m,1,1,2)\), i.e., for all \(n\geq 1\), the ratios
\(\cfrac{M\sb{m,1}^{n}}{M\sb{1,2}^{n}}\) are constant quantities again, since  we obtain  for any index \(t\)
\[\cfrac{A\sb t}{B\sb t} = \cfrac{1}{(-1)^{m-1}\prod\sb{h=1}^m \alpha\sb h}.\]
\end{remark}

In the next section, we focus on the cubic case, since some well--known and studied matrices arise as particular cases of the matrix \(M\).

\section{Approximations of cubic irrationalities}
\label{gen}
The following matrix
\[\begin{pmatrix} x & r \cr 1 & x  \end{pmatrix},\]
for \(x,r\in\mathbb Z\) and \(r\) positive square--free, is used to determine classic R\'{e}dei rational functions \cite{Redei}. Powers of this matrix yield rational approximations of \(\sqrt{r}\). In \cite{abcm}, the authors proved that among these approximations, Pad\'{e} and Newton approximations can be found. A natural generalization of this matrix is given by 
\[A=\begin{pmatrix}  x & r & r \cr 1 & x & r \cr 1 & 1 & x \end{pmatrix}\]
for \(x,r\in\mathbb Z\) and \(r\) cube--free. This matrix has been introduced by Khovanskii \cite{Khov} to approximate \(\sqrt[3]{r}\) and \(\sqrt[3]{r^2}\). Let \(A^n\sb{i,j}\) denote the \(i,j\)--th entry of \(A^n\), we have
\[\lim\sb{n\rightarrow+\infty}\cfrac{A^n\sb{1,1}}{A^n\sb{3,1}}=\sqrt[3]{r^2},\quad \lim\sb{n\rightarrow+\infty}\cfrac{A^n\sb{2,1}}{A^n\sb{3,1}}=\sqrt[3]{r}.\]
In \cite{Lau}, authors studied the role of \(x\) in order to ensure the fastest convergence. In \cite{abcm2}, authors focused on
\[B=\begin{pmatrix} x & r & 0 \cr 0 & x & r \cr 1 & 0 & x  \end{pmatrix}\]
and similarly, we have 
\[\lim\sb{n\rightarrow +\infty}\cfrac{B^n\sb{1,1}}{B^n\sb{3,1}}=\sqrt[3]{r^2},\quad \lim\sb{n\rightarrow+\infty}\cfrac{B^n\sb{2,1}}{B^n\sb{3,1}}=\sqrt[3]{r}.\]
Moreover, in this case the authors proved that \(\cfrac{B^n\sb{1,1}}{B^n\sb{3,1}}\) and \(\cfrac{B^n\sb{2,1}}{B^n\sb{3,1}}\) are convergents of certain generalized continued fractions yielding periodic representations of cubic roots. Finally, let \(\alpha\) be the real root largest in modulus of \(t^3-pt^2-qt-r\), with \(p,q,r\in\mathbb Q\). In \cite{m}, the author showed that matrix
\[ C=\begin{pmatrix} x & r & pr \cr 0 & q+x & pq+r \cr 1 & p & p^2+q+x \end{pmatrix}\]
yields simultaneous rational approximations of \(\alpha-p\) and \(\cfrac{r}{\alpha}\). However, in \cite{m} the author did not focus on the study of rational approximations, but studied matrix \(C\) in order to determine periodic representations for any cubic irrational.

Matrices \(A\), \(B\), and \(C\) are all particular cases of the matrix \(M\) studied in the previous section. Indeed, if we consider the cubic polynomial \(t^3-pt^2-qt-r\), \(p,q,r\in \mathbb Q\), then, given integer numbers \(x,y,z\), we have

\begin{equation} \label{mir}
M((x,y,z),(p,q,r)) = 
\left[\begin{array}{ccc}
x & rz & ry+prz\\
y & x+qz & qy+(pq+r)z\\
z & y+pz & x+py+(p^2+q)z
\end{array}\right].
\end{equation}

Previous matrices are particular cases of \(M\). Indeed,
\[A=M((x,1,1),(0,0,r)),\quad B=M((x,0,1),(0,0,r)),\quad C=M((x,0,1),(p,q,r)).\]

By Theorem \ref{thm:main}, it is possible to explicitly write limits of ratios between two elements of \(\left(M((x,y,z),(p,q,r))\right)^n\). Let \(\alpha\sb1, \alpha\sb2, \alpha\sb3\) be roots of \(t^3-pt^2-qt-r\) and suppose \(c((x,y,z),\alpha\sb1)>1\), i.e., we have chosen \(x, y, z\) so that matrix \(M\) can be used for approximating \(\alpha\sb1\). For instance, we have
\[\lim\sb{n\rightarrow+\infty}\cfrac{M^n\sb{2,2}}{M^n\sb{h,k}}=\bar M\sb{h,k},\]
given any \(h=1,2,3\) and \(k=1,2,3\), where
\[\bar M=\begin{pmatrix} -\cfrac{\alpha\sb1(\alpha\sb2+\alpha\sb3)}{\alpha\sb{2}\alpha\sb{3}} & -\cfrac{\alpha\sb2+\alpha\sb3}{\alpha\sb2\alpha\sb3} & -\cfrac{\alpha\sb2+\alpha\sb3}{\alpha\sb1\alpha\sb2\alpha\sb3} \cr \alpha\sb1 & 1 & \cfrac{1}{\alpha\sb1} \cr -\alpha\sb1(\alpha\sb2+\alpha\sb3) & -\alpha\sb2-\alpha\sb3 & -\cfrac{\alpha\sb2+\alpha\sb3}{\alpha\sb1} \end{pmatrix}=\begin{pmatrix} \cfrac{\alpha^{2}\sb1(\alpha\sb1-p)}{r} & \cfrac{\alpha\sb1(\alpha\sb1-p)}{r} & \cfrac{\alpha\sb1-p}{r} \cr \alpha\sb1 & 1 & \cfrac{1}{\alpha\sb1} \cr \alpha\sb1(\alpha\sb1-p) & \alpha\sb1-p & \cfrac{\alpha\sb1-p}{\alpha\sb1} \end{pmatrix}.\]
Another example is provided by
\[\lim\sb{n\rightarrow+\infty}\cfrac{M^n\sb{3,3}}{M^n\sb{h,k}}=\bar M\sb{h,k},\]
given any \(h=1,2,3\) and \(k=1,2,3\), we have
\[\bar M=\begin{pmatrix} \cfrac{\alpha\sb1^2}{\alpha\sb2\alpha\sb3} & \cfrac{\alpha\sb1}{\alpha\sb2\alpha\sb3} & \cfrac{1}{\alpha\sb2\alpha\sb3} \cr
 & & \cr -\cfrac{\alpha\sb1^2}{\alpha\sb2+\alpha\sb3} & -\cfrac{\alpha\sb1}{\alpha\sb2+\alpha\sb3} & -\cfrac{1}{\alpha\sb2+\alpha\sb3} \cr
 & & \cr \alpha\sb1^2 & \alpha\sb1 & 1 \end{pmatrix}=\begin{pmatrix} \cfrac{\alpha\sb1^3}{r} & \cfrac{\alpha\sb1^2}{r} & \cfrac{\alpha\sb1}{r} \cr & & \cr \cfrac{\alpha\sb1^2}{\alpha\sb1-p} & \cfrac{\alpha\sb1}{\alpha\sb1-p} & \cfrac{1}{\alpha\sb1-p} \cr & & \cr \alpha\sb1^2 & \alpha\sb1 & 1 \end{pmatrix}.\]
Clearly, if we have \(c((x,y,z),\alpha\sb2)>1\), then above results still hold exchanging indexes 1 and 2.

\begin{remark}
We can check the considerations of the Remark \ref{rem:thm2} in the above matrix. Indeed, we have that \(\cfrac{\bar M\sb{3,1}}{\bar M\sb{1,2}} = \cfrac{\bar M\sb{3,2}}{\bar M\sb{1,3}} = r\). 
\end{remark}

\section{Numerical results} \label{num}
In this section, we will deal with the quality of approximations provided by \(M\) comparing it with known iterative methods as Newton, Halley and similar ones. We would like to highlight that our method consists in evaluating powers of the matrix \(M\) and this is accomplished using only integer arithmetic, i.e., it is an error--free method.

\noindent
It is well--known that continued fractions provide best approximations of real numbers (see \cite{Olds} for a good survey about continued fractions). In particular, given the \(n\)--th convergent \(\cfrac{p\sb n}{q\sb n}\) of the continued fraction of a real number \(\alpha\), then 

\[\left| \alpha-\cfrac{p\sb n}{q\sb n} \right|\leq\left| \alpha-\cfrac{a}{b} \right|,\quad a,b\in\mathbb Z,\]
for all \(b\leq q\sb n\). However, evaluating approximations by means of continued fractions is not generally an used method since a continued fraction is a non--terminating expression. Indeed, many different methods are studied and used in this context. In the particular case of approximations of algebraic numbers, many root--finding algorithms have been developed. Here, we compare some of these methods with approximations provided by \(M\). Taking into account classic definition of best approximations above described, we will compare quality of rational approximations, provided by different methods, having denominators with same size.

We will consider real roots of the Ramanujan cubic polynomial \(t^3+t^2-2t-1\) and we study approximations of \(M\) for different values of \((x,y,z)\). The roots of this polynomial are quite famous (see, e.g., \cite{Wit}) and they are
\[\alpha\sb 1=2\cos\cfrac{2\pi}{7}, \quad \alpha\sb2=2\cos\cfrac{4\pi}{7},\quad \alpha\sb3=2\cos\cfrac{8\pi}{7}\]
with \(\lvert\alpha\sb3\rvert>\lvert\alpha\sb1\rvert>\lvert\alpha\sb2\rvert\). 

\subsection{Approximations of M for different values of \((x,y,z)\)} \label{xyz}
Considering \(M=M((x,y,z),(-1,2,1))\), for some values of \((x,y,z)\), we provide approximations of \(\alpha\sb3\) by means of the sequence \[m\sb n(x,y,z):=\cfrac{M^n\sb{2,1}(x,y,z)}{M^n\sb{3,1}(x,y,z)}-1.\]
In Table \ref{table:approx-m-a3} and \ref{table:approx-size-m}, we summarize quality of our approximations for different values of \((x,y,z)\). In particular, we consider
\[\begin{cases} x=0, y=0, z=1, \quad c(0,0,1)=2.08815 \cr
x=1, y=-1, z=1,\quad c(1,-1,1)=3.68141 \cr
x=0, y=-1, z=1,\quad c(0,-1,1)=7.85086 \cr
x=69, y=99, z=-124,\quad c(69,99,-124)=1343.4 \end{cases}\]
and we show values of \(\lvert m\sb n(x,y,z)-\alpha\sb3 \rvert\) and size of denominators of \(m\sb n(x,y,z)\), i.e., number of digits \(D\sb n(x,y,z)\) of \(M\sb{3,1}^n(x,y,z)\).

In Figures \ref{fig:approx} and \ref{fig:digits}, we depict the situations described in Tables \ref{table:approx-m-a3} and \ref{table:approx-size-m}, respectively.

We can observe that approximations provided by \(m\sb n(69,99,-124)\) are the most accurate. %as expected considering Remark \ref{rem:c}. 
However, they have the greatest denominators. Thus these approximations could not be optimal taking into account previous considerations about continued fractions.

It is interesting to observe that approximations \(m\sb n(0,-1,1)\) are more accurate than \(m\sb n(1,-1,1)\) and furthermore they have smaller denominators. Thus, approximations \(m\sb n(0,-1,1)\) are surely better than approximations \(m\sb n(1,-1,1)\) in any case.

In Table \ref{table:comparison1}, we compare approximations whose denominators have the same number of digits (for 16, 35, and 62 digits). We can observe that approximations \(m\sb n(0,-1,1)\) are more accurate than others with the same size of denominators.   

In conclusion, if we want to obtain accurate approximations with low values of \(n\) and we are not interested in the size of denominators, it is sufficient to find values \((x,y,z)\) that maximize \(c(x,y,z,\alpha\sb3)\). However, in this way approximations could not be the better than others with the same size of denominators. Indeed, we have seen that approximations obtained in correspondence of \(c(0,-1,1,\alpha\sb3)=7.85086\) are more accurate than approximations with same size of denominators provided for \(c(69,99,-124,\alpha\sb3)=1343.4\). It would be really interesting to study techniques that allow to determine values of \((x,y,z)\) that provide best approximations in this sense.

\subsection{Approximations of M for same values of \((x,y,z)\)}

By Theorem \ref{thm:main}, we can obtain approximations of a cubic irrationality by using different ratios of \(M^n\), for same values of \((x,y,z)\). Let us consider \((x,y,z)\) such that \(c(x,y,z,\alpha\sb3)>1\), then the reader can check that
\[\lim\sb{n\rightarrow+\infty}\cfrac{M^n\sb{2,2}}{M^n\sb{2,1}}=\lim\sb{n\rightarrow+\infty}\cfrac{M^n\sb{2,3}}{M^n\sb{2,2}}=\lim\sb{n\rightarrow+\infty}\cfrac{M^n\sb{3,3}}{M^n\sb{3,2}}=\alpha\sb3. \]
In this paragraph, we briefly compare these approximations with each other. Let us consider \((x,y,z)=(0,-1,1)\). In Tables \ref{table:approx-same-xyz} and \ref{table:digits-same-xyz}, we report distance from exact value of \(\alpha\sb3\) and size of denominators for these approximations, respectively. We can see that there are not significative differences among these approximations. Similar results are obtained for other triples \((x,y,z)\).

\subsection{Approximations of M for different values of \((p,q,r)\)}
By Theorem \ref{thm:main}, we can also obtain approximations of a cubic irrationality using different values of \((p,q,r)\). In this paragraph, we focus on approximations of \(\alpha\sb2\). 

Considering \((p,q,r)=(-1,2,1)\) we can find a triple \((x,y,z)\) such that \(c(x,y,z,\alpha\sb2)>1\). For instance we have \(c(10,-2,-3,\alpha\sb2)=2.67\) and we know that
\[\lim\sb{n\rightarrow+\infty}\cfrac{M^n\sb{2,1}}{M^n\sb{3,1}}-1=\alpha\sb2.\]
Moreover, we can consider polynomial \(t^3+2t^2-t-1\) (i.e., the reflected polynomial of \(t^3+t^2-2t-1\)) whose roots are \(frac{1}{\alpha\sb1}\), \(\frac{1}{\alpha\sb2}\), \(\frac{1}{\alpha\sb3}\). In this case we use \((p,q,r)=(-2,1,1)\) and for \((x,y,z)=(-3,1,-1,1/\alpha\sb2)\) we obtain \(c(x,y,z,1/\alpha\sb2)=2.67\). Thus, by theorem \ref{thm:main}, we have
\[\lim\sb{n\rightarrow+\infty}\cfrac{M^n\sb{1,1}}{M^n\sb{3,1}}-1=\alpha\sb2,\]
since \(r=1\). Let us observe that we have searched for triple \((x,y,z)\) determining a value of \(c(x,y,z,1/\alpha\sb2)\) similar to the previous case.

Finally, we can also consider \((p,q,r)=(2,1,-1)\). In this case \(t^3-2t^2-t+1\) has roots \(\alpha\sb1+1\), \(\alpha\sb2+1\), \(\alpha\sb3+1\) and the reader can check that, e.g, \(c(x,y,z,\alpha\sb2+1)=2.67\) so that
\[\lim\sb{n\rightarrow+\infty}\cfrac{M^n\sb{2,1}}{M^n\sb{3,1}}+1=\alpha\sb2,\]
since \(p=2\).

In Figures \ref{fig:approx2} and \ref{fig:digits2}, we depict behavior of these approximations, considering differences with exact value of \(\alpha\sb2\) and size of denominators, respectively. Even in this case, there are not significative differences among these approximations. Thus, in general quality of approximations is heavily affected by values of \(c(x,y,z,\alpha)\).

\subsection{Comparison with known root--finding methods}
In this  paragraph, we compare approximations provided by \(M\) with Newton, Halley, and Noor \cite{Noor} methods. We briefly recall these methods.
\begin{definition}
The Newton method provides rational approximations of a real root \(\alpha\) of \(f(t)\) by means of the sequence of rational numbers \(x\sb n\) by the equation
\[x\sb{n+1}=x\sb n-\cfrac{f(x\sb n)}{f'(x\sb n)},\quad \forall n\geq0\]
with a suitable initial condition \(x\sb0\).
\end{definition}
\begin{definition}
The Halley method provides rational approximations of a real root \(\alpha\) of \(f(t)\) by means of the sequence of rational numbers \(x\sb n\) by the equation
\[x\sb{n+1}=x\sb n-\cfrac{2f(x\sb n)f'(x\sb n)}{2(f'(x\sb n))^2-f(x\sb n)f''(x\sb n)},\quad \forall n\geq0\]
with a suitable initial condition \(x\sb0\).
\end{definition}
\begin{definition}
The Noor method provides rational approximations of a real root \(\alpha\) of \(f(t)\) by means of the sequence of rational numbers \(x\sb n\) by the equations
\[\begin{cases} y\sb n=x\sb n-\cfrac{f(x\sb n)}{f'(x\sb n)}, \quad \forall n\geq0 \cr x\sb{n+1}=y\sb n-\cfrac{f(y\sb n)}{f'(y\sb n)}-\cfrac{(f(y\sb n))^2f''(y\sb n)}{2(f'(y\sb n))^3},\quad \forall n\geq0 \end{cases}\]
with a suitable initial condition \(\sb0\).
\end{definition}
In Table \ref{table:n-h-n}, we report approximations of \(\alpha\sb3\) provided by Newton, Halley, and Noor methods. In particular we report the size of denominators and the difference between these approximations and the exact value of the root.

Using notation of subsection \ref{xyz}, let us consider \((p,q,r)=(-1,2,1)\) and \((x,y,z)=(0,-1,1)\). Sequence \(m\sb n(x,y,z)\) approximates \(\alpha\sb3\). In particular, we have that \(\lvert m\sb{10} - \alpha\sb3 \rvert=2.7\times10^{-9}\) and number of digits of denominator is 7. Furthermore, we have
\[\lvert m\sb{25} - \alpha\sb3 \rvert=1.0\times10^{-22}, \quad \lvert m\sb{852} - \alpha\sb3 \rvert=8.3\times10^{-763},\]
where denominators have 17 and 595 digits, respectively. Thus, approximations \(m\sb n\), with same accuracy of iterative methods, have size of denominators much less than iterative methods. Equivalently, we can say that our approximations, having same size of denominators with respect to iterative methods, are much more accurate.

If we are only interested to have high accuracy in few steps, we can consider \((p,q,r)=(-1,2,1)\), \((x,y,z)=(69,99,-124)\), \(N=M^3\) and \(N\sb n=\cfrac{N^n\sb{2,1}}{N^n\sb{3,1}}-1\). In Table \ref{table:n}, we report quality of approximations \(N\sb n\) for \(n=1,...,6\). We can observe that in this case we reach high accuracy in few steps, with better performances than iterative methods.

Finally, we would like to observe that evaluation of powers of matrices is very fast from a computational point of view and it is faster than iterative methods.

\section{Conclusion}\label{con}

We have introduced and studied a family of matrices (which generalize known ones) whose powers yield rational approximations of algebraic irrationalities. These matrices depend on some parameters whose meaning has been deeply discussed. These parameters allow to obtain many different approximations for the same irrational, providing a very handy method that can be adjusted as necessary, in order to obtain the desired quality of approximation. Numerical results have been also presented in order to show effectiveness of our approach. Some questions should be deeper analyzed:
\begin{itemize}
\item study the role of \(\textbf{x}\) in the size of denominators;
\item explicitly determine maximum of \(c(\textbf{x},\alpha)\);
\item study of the quality of simultaneous approximations (as defined, e.g., in \cite{Adams}).
\end{itemize}

\section{Acknowledgments}

The authors thanks the anonymous referee for carefully reading the paper and for all the suggestions and comments that greatly improved it.

\newpage

\section{FIGURES AND TABLES}\label{fig}

\begin{table}[hp]\small %1
\caption{Quality of approximations of matrices \(M\): \(\lvert m\sb n(x,y,z)-\alpha\sb3 \rvert\).}
\centering
\tabcolsep=0.15cm
\scalebox{0.82}{
\begin{tabular}{|c||c|c|c|c|c|c|}
\hline 
& \(n=5\) & \(n=20\) & \(n=35\) & \(n=50\) & \(n=75\) & \(n=100\)  \cr \hline \hline
\(\lvert m\sb n(0,0,1)-\alpha\sb3 \rvert\) & 0.06 & 9.8\(\times10^{-7}\) & 1.6\(\times10^{-11}\) & 2.5\(\times10^{-16}\) & 2.5\(\times10^{-24}\) & 2.6\(\times10^{-32}\)  \cr \hline
\(\lvert m\sb n(1,-1,1)-\alpha\sb3 \rvert\) & 0.002 & 1.2\(\times10^{-11}\) & 3.8\(\times10^{-20}\) & 1.2\(\times10^{-28}\) & 8.7\(\times10^{-43}\) & 6.1\(\times10^{-57}\)   \cr \hline
\(\lvert m\sb n(0,-1,1)-\alpha\sb3 \rvert\) & 8\(\times 10^{-5}\) & 3.1\(\times10^{-18}\) & 1.2\(\times10^{-31}\) & 4.4\(\times 10^{-45}\) & 1.9\(\times10^{-67}\) & 7.9\(\times10^{-90}\)  \cr \hline
\(\lvert m\sb n(69,99,-124)-\alpha\sb3 \rvert\) & 1\(\times10^{-15}\) & 4.0\(\times10^{-63}\) & 9.5\(\times10^{-110}\) & 8.6\(\times10^{-157}\) & 6.1\(\times10^{-235}\) & 3.7\(\times10^{-313}\) \cr \hline

\end{tabular}
}
\label{table:approx-m-a3}
\end{table}

\begin{table}[hp]\small %2
\caption{Quality of approximations of matrices \(M\): number of digits \(D\sb n(x,y,z)\) of \(M\sb{31}^n(x,y,z)\).}
\centering
\tabcolsep=0.15cm
\scalebox{0.82}{
\begin{tabular}{|c||c|c|c|c|c|c|}
\hline 
\textbf{Number of digits}& \(n=5\) & \(n=20\) & \(n=35\) & \(n=50\) & \(n=75\) & \(n=100\)  \cr \hline \hline
\(D\sb n(0,0,1)\) & 2 & 9 & 14 & 25 & 36 & 49  \cr \hline
\(D\sb n(1,-1,1)\) & 4 & 16 & 21 & 39 & 59 & 78   \cr \hline
\(D\sb n(0,-1,1)\) & 3 & 12 & 21 & 35 & 50 & 69  \cr \hline
\(D\sb n(69,99,-124)\) & 13 & 52 & 92 & 135 & 203 & 269 \cr \hline

\end{tabular}
}
\label{table:approx-size-m}
\end{table}

\begin{table}[hp]\small %3
\caption{Comparison of approximations \(m\sb n(x,y,z)\) with same number of digits of denominators, for different values of \((x,y,z)\).}
\centering
\tabcolsep=0.15cm
\scalebox{0.82}{
\begin{tabular}{|c|c||c|c|}
\hline 
\((x,y,z)\) & \(n\) & \(\lvert m\sb n(x,y,z) -\alpha\sb3 \rvert\) & \(D\sb n(x,y,z)\)  \cr \hline \hline
\((0,0,1)\) & \(37\) & \(3.6\times 10^{-12}\) & \(16\)  \cr \hline
\((1,-1,1)\) & \(20\) & \(1.2\times 10^{-11}\) & \(16\)   \cr \hline
\((0,-1,1)\) & \(23\) & \(6.4\times 10^{-21}\) & \(16\)  \cr \hline
\((69,99,-124)\) & \(6\) & \(1.0\times 10^{-19}\) & \(16\) \cr \hline \hline
\((0,0,1)\) & \(74\) & \(5.3\times 10^{-24}\) & \(35\)  \cr \hline
\((1,-1,1)\) & \(45\) & \(8.3\times 10^{-26}\) & \(35\)   \cr \hline
\((0,-1,1)\) & \(50\) & \(4.4\times 10^{-45}\) & \(35\)  \cr \hline
\((69,99,-124)\) & \(14\) & \(1.9\times 10^{-44}\) & \(35\) \cr \hline \hline
\((0,0,1)\) & \(128\) & \(2.9\times 10^{-41}\) & \(62\)  \cr \hline
\((1,-1,1)\) & \(82\) & \(9.4\times 10^{-47}\) & \(62\)   \cr \hline
\((0,-1,1)\) & \(91\) & \(8.9\times 10^{-82}\) & \(62\)  \cr \hline
\((69,99,-124)\) & \(23\) & \(3.7\times 10^{-72}\) & \(62\) \cr \hline

\end{tabular}
}
\label{table:comparison1}
\end{table}
\newpage
\begin{table}[hp]\small %4
\caption{Distance from exact value of \(\alpha\sb3\) and different ratios of \(M\) with \((x,y,z)=(0,-1,1)\).}
\centering
\tabcolsep=0.15cm
\scalebox{0.82}{
\begin{tabular}{|c||c|c|c|c|c|c|}
\hline 
& \(n=5\) & \(n=20\) & \(n=35\) & \(n=50\) & \(n=75\) & \(n=100\)  \cr \hline \hline
\(\left| \cfrac{M^n\sb{2,1}}{M^n\sb{3,1}}-1-\alpha\sb3 \right|\) &  8.0\(\times 10^{-5}\) &3.1\(\times10^{-18}\) &1.2\(\times10^{-31}\) & 4.4\(\times 10^{-45}\) & 1.9\(\times10^{-67}\) & 7.9\(\times10^{-90}\)  \cr \hline
\(\left| \cfrac{M^n\sb{2,2}}{M^n\sb{2,1}}-\alpha\sb3 \right|\) & 5.0\(\times10^{-5}\) & 2.1\(\times10^{-18}\) & 8.1\(\times10^{-32}\) & 3.0\(\times10^{-45}\) & 1.3\(\times10^{-67}\) &  5.4\(\times10^{-90}\)  \cr \hline
\(\left|\cfrac{M^n\sb{2,3}}{M^n\sb{2,2}}-\alpha\sb3 \right|\) & 2.0\(\times10^{-5}\) & 5.3\(\times10^{-19}\) & 2.0\(\times10^{-32}\) & 7.5\(\times10^{-46}\) & 3.2\(\times10^{-68}\) & 1.3\(\times10^{-90}\)  \cr \hline
\(\left| \cfrac{M^n\sb{3,3}}{M^n\sb{3,2}}-\alpha\sb3 \right|\) & 2.1\(\times10^{-5}\) & 7.6\(\times10^{-19}\) & 2.9\(\times10^{-32}\) & 1.1\(\times10^{-45}\) & 4.6\(\times10^{-68}\) & 1.9\(\times10^{-90}\) \cr \hline

\end{tabular}
}
\label{table:approx-same-xyz}
\end{table}

\begin{table}[hp]\small %5
\caption{Number of digits of \(M^n\sb{3,1}\), \(M^n\sb{2,1}\), \(M^n\sb{2,2}\), \(M^n\sb{3,2}\) with \((x,y,z)=(0,-1,1)\).}
\centering
\tabcolsep=0.15cm
\scalebox{0.82}{
\begin{tabular}{|c||c|c|c|c|c|c|}
\hline 
\textbf{Number of digits}& \(n=5\) & \(n=20\) & \(n=35\) & \(n=50\) & \(n=75\) & \(n=100\)  \cr \hline \hline
\(M^n\sb{31}\) & 3 & 12 & 21 & 35 & 50 & 69  \cr \hline
\(M^n\sb{21}\) & 3 & 14 & 25 & 35 & 50 & 70   \cr \hline
\(M^n\sb{22}\) & 4 & 14 & 25 & 35 & 53 & 70  \cr \hline
\(M^n\sb{32}\) & 4 & 14 & 25 & 34 & 53 & 70 \cr \hline

\end{tabular}
}
\label{table:digits-same-xyz}
\end{table}

\begin{table}[hp]\small %6
\caption{Quality of approximations of iterative methods.}
\centering
\tabcolsep=0.15cm
\scalebox{0.82}{
\begin{tabular}{|c|c|c|c|}
\hline 
\textbf{Method} & \(n\) & n. digits den. & \(\lvert x\sb n - \alpha\sb3 \rvert\)  \cr \hline \hline
Newton & 3 & 9 & 1.1\(\times 10^{-6}\)  \cr \hline
Newton & 5 & 80 & 9.2\(times 10^{-14}\)  \cr \hline
Newton & 10 & 19352 & 3.7\(\times 10^{-762}\)  \cr \hline
Halley & 2 & 9 & 8.1\(\times 10^{-8}\)  \cr \hline
Halley & 3 & 45 & 4.8\(\times 10^{-22}\)  \cr \hline
Halley & 6 & 28140 & 1.2\(\times 10^{-527}\)  \cr \hline
Noor & 2 & 18 & 1.1\(\times 10^{-6}\)  \cr \hline
Noor & 3 & 186 & 2.7\(times 10^{-18}\)  \cr \hline
Noor & 6 & 43136 & 4.8\(\times 10^{-471}\)  \cr \hline

\end{tabular}
}
\label{table:n-h-n}
\end{table}
\begin{table}[hp]\small %7
\caption{Quality of approximations of \(N\sb n\).}
\centering
\tabcolsep=0.15cm
\scalebox{0.82}{
\begin{tabular}{|c|c|c|}
\hline 
 \(n\) & n. digits den. & \(\lvert N\sb n - \alpha\sb3 \rvert\)  \cr \hline \hline
 1 & 8 & 1.9\(\times 10^{-9}\)  \cr \hline
 2 & 24 & 2.8\(\times 10^{-28}\)  \cr \hline
 3 & 73 & 1.1\(\times 10^{-84}\)  \cr \hline
 4 & 219 & 1.0\(\times 10^{-253}\)  \cr \hline
 5 & 658 & 1.\(\times 10^{-760}\)  \cr \hline
 6 & 1975 & 8.4\(\times 10^{-2281}\)  \cr \hline
\end{tabular}
}
\label{table:n}
\end{table}

%\newpage
%
%\begin{figure}[hp]
%\centering
%\includegraphics[scale=0.55]{approx.png}
%\caption{Rate of convergences of \(m\sb n(x,y,z)\) for different values of \((x,y,z)\).}
%\label{fig:approx}
%\end{figure}
%%
%\begin{figure}[hp]
%\centering
%\includegraphics[scale=0.55]{digits}
%\caption{Number of digits of \(M\sb{3,1}^n\) for different values of\((x,y,z)\).}
%\label{fig:digits}
%\end{figure}
%\newpage
%\begin{figure}[hp]
%\centering
%\includegraphics[scale=0.6]{approx2}
%\caption{Differences between approximations provided by \(M\) and \(\alpha\sb2\) for different values of \((p,q,r)\): (-1,2,1), (-2,1,1), (2,1,-1)}
%\label{fig:approx2}
%\end{figure}
%\newpage
%\begin{figure}[hp]
%\centering
%\includegraphics[scale=0.6]{digits2}
%\caption{Number of digits of denominators for different values of \((p,q,r)\): (-1,2,1), (-2,1,1), (2,1,-1)}
%\label{fig:digits2}
%\end{figure}
%
%\newpage
%

\end{document}